\newcommand{\Octoni}{{\mathbb{O}}}     
\newcommand{\R}{{\mathbb{R}}}          
\newcommand{\rr}{\rightarrow}
\newcommand{\lrr}{\longrightarrow}
\newcommand{\calG}{{\cal G}}             %
\newcommand{\calR}{{{\cal R}^U}}             %
\newcommand{\na}{{\nabla}}
\newcommand{\Aut}[1]{{\mathrm{Aut}}\,{#1}}
\newcommand{\dx}{{\mathrm{d}}}
\newcommand{\tile}{{\tilde{e}}}
\newcommand{\vol}{{\mathrm{vol}}}
\newcommand{\Vol}{{\mathrm{Vol}}}
\newcommand{\expo}{{\mathrm{e}}}
\newtheorem{teo}{Theorem}[section]
\newtheorem{coro}{Corollary}[section]
\newtheorem{prop}{Proposition}[section]
\def\cyclic{\mathop{\kern0.9ex{{+}
\kern-2.2ex\raise-.28ex\hbox{\Large\hbox{$\circlearrowright$}}}}\limits}
\title{Variations of gwistor space}
\author{R. Albuquerque\footnote{Departamento de Matem\'atica da Univer\-si\-dade de
\'Evora and Centro de Investiga\c c\~ao em Matem\'atica e Aplica\c c\~oes (CIMA-U\'E), Rua
Rom\~ao Ramalho, 59, 671-7000 \'Evora, Portugal.}\\ rpa@dmat.uevora.pt}
\begin{document}


\maketitle


\begin{abstract}

We study natural variations of the $G_2$ structure $\sigma_0\in\Lambda^3_+$ existing on
the unit tangent sphere bundle $SM$ of any oriented Riemannian 4-manifold $M$. We find a
circle of structures for which the induced metric is the usual one, the so-called Sasaki
metric, and prove how the original structure has a preferred role in the
theory. We deduce the equations of calibration and cocalibration, as well as those of
$W_3$ pure type and nearly-parallel type.

\end{abstract}


\vspace*{10mm}

{\bf Key Words:} calibration, Einstein manifold, $G_2$-structure, gwistor space.

\vspace*{2mm}

{\bf MSC 2010:} Primary:  53C10, 53C20, 53C25; Secondary: 53C28

\vspace*{10mm}

The author acknowledges the support of Funda\c{c}\~{a}o Ci\^{e}ncia e Tecnologia, Portugal, through
CIMA-U\'E, Centro de Investiga\c c\~ao em Matem\'atica e Aplica\c c\~oes da Universidade de \'Evora,
and through SFRH/BSAS/895/2009 (sabbatical scholarship).

\markright{\sl\hfill  R. Albuquerque \hfill}

\vspace*{10mm}

\section{Introduction}

In \cite{AlbSal1,AlbSal2} it was shown how a natural
$G_2=\Aut{\Octoni}$ structure is associated to the unit tangent sphere bundle
$\pi:SM\rr M$ of any given oriented Riemannian 4-manifold $M$. The techniques
are twistorial, such as those learned by the author from \cite{Obri}, so we have
chosen to give the name of $G_2$-twistors or simply \textit{gwistors} to the new
spaces.

The theory starts by a construction of the octonions inside $TTM$, restricted to the
3-sphere fibre bundle $SM$, which we take a moment to explain.
Recall the Levi-Civita connection of the base induces a canonical splitting
of the tangent bundle of $TM$. Both vertical and horizontal subbundles $V,H$
become isometric to $\pi^*TM$ with the pull-back metric. The metric direct sum over
$TM$ is called the Sasaki metric of this manifold. Independently of the metric, $V$
has a tautological section, denoted $U$ and defined by $U_u=u$, hence also a vertical
vector field on $SM=\{u\in TM:\ \|u\|=1\}$. Now
each point $u$ is identified with the identity element or the generator of the real
line in $\Octoni$. Then we use the volume form coupled with $U$, to induce a
cross-product on $u^\perp\subset V$. A conjugation map is trivial to define. Together
these give a quaternionic structure on $V$. Then, applying the well-known
Cayley-Dickson process, we obtain the $\Octoni$-structure on $V\oplus H$. 

The pull-back of $TM$ also inherits a metric connection $\na^*=\pi^*\na$ and hence
parallel identifications of horizontals and verticals, passing through $\pi^*TM$, cf. loc.
cit. and \cite{Sakai}. The manifold $SM$ is endowed with the induced metric from the
canonical or Sasaki metric on $TM$. Clearly $TSM$ coincides with $V_1\oplus H$ where
$V_1=\{v\in V:\ \langle u,v\rangle=0\}$ at each point $u$. Since $u$ is pointing
outwards, our space $SM$ inherits a $G_2$-structure, for which it receives the name
of gwistor space. Recall $G_2=\Aut{\Octoni}$. Clearly the structure is the extension
of an $SO(3)$ structure. The connection induces a projection $\na^*_.U:TSM\rr V$ with
kernel $H$.

By a Theorem of Y.~Tashiro in \cite{Blair} it is known that $SM$ has an almost
contact structure for a base of arbitrary dimension. As these are rigid geometrical
objects, the contact structure is bound to be K-contact if and only if $M$ is of
constant
sectional curvature 1. Then it turns out also to be Sasakian. Locally the space is
the same as the trivial fibration $V_{5,2}=SO(5)/SO(3)$.

Now if we leave aside the Cayley-Dickson process and concentrate on the five invariant
3-forms which are naturally defined on $SM$, then we may try to find other interesting
$G_2$ structures. This article is devoted to them, the \textit{variations} of gwistor
space, which may also be called $g$-natural $G_2$-structures on the unit tangent sphere
bundle, in analogy with the terms for the metrics used by \cite{AbbaCal,AbbaKowa} and many
references therein. On the other hand, the terms deformation or perturbation are
also used in similar context by other authors, so we made a choice.

We readily announce the utilization of some computer algebra software for
the proof of Theorem \ref{cocalibrationingeneral} below. It is a polynomial
computation of the 7th order in four variables which we believe anyone can reproduce
easily.

This work was started during the author's sabbatical leave at Philipps Universit\"at, Marburg,
and only later finished at IHES, Bures-sur-Yvette. He kindly acknowledges the hospitality
of both institutions and expresses his thanks to I. Agricola, Th. Friedrich,
M. Kontsevich and S. Meinhardt for fruitful conversations.


\subsection{The basic 3-forms}

We start by abbreviating the notation and write $SM=\calG$. There is, as we have seen, an
isometry connecting $H$ with $V$, which we denote by $B$. We extend it by 0 to $V$
thus producing an endomorphism $B$ of $TTM$. Then the tangent vector field
$B^tU$ generates a real line bundle, contained in $T\calG$, and a 1-form
$\theta=(B^tU)^\flat$. We may write a splitting, with $H_1=B^tV_1$:
\begin{equation}\label{splittingdeTSM}
 T^*\calG=\R\theta\oplus H_1^*\oplus V_1^* .
\end{equation}
We now pass to the language of differential forms. The 1-form $\theta$ is the
aforementioned contact structure, satisfying:
\begin{equation}
 \theta_u(v)=\langle u,\dx\pi(v)\rangle,\quad\quad\forall u\in\calG,\ v\in T\calG.
\end{equation}
The usual pull-back (horizontal) of the volume form of $M$ is also denoted by
$\vol$. The vertical pull-back of $\vol\in\Omega^4(M)$ contracted with $U$ is denoted by
$\alpha$; then we define analogously a 3-form $\alpha_3=(B^tU)\lrcorner\vol$.
Of course,
\begin{equation}
 \theta\wedge\alpha_3=\vol,\qquad\qquad  \vol\wedge\alpha=\Vol_{\calG}.
\end{equation}
As shown in \cite{Alb2}, it is possible to find locally an `adapted' frame, i.e. an
oriented orthonormal frame $e_0,e_1,\ldots,e_6$ respecting \eqref{splittingdeTSM}. In
particular such that (with usual notation for the co-framing, $e^{ab\cdots
c}=e^a\wedge e^b\wedge\cdots\wedge e^c$)
\begin{equation}\label{algumasformas}
 \theta=e^0,\qquad\qquad\alpha_3=e^{123},\qquad\qquad\alpha=e^{456}.
\end{equation}
It is easy to compute that $\dx\theta(v,w)=\langle(B^t-B)v,w\rangle,\ \forall
v,w\in T\calG$, which restricts to a symplectic 2-form on the vector bundle $H_1\oplus
V_1$ and hence is written as $\dx\theta=e^{41}+e^{52}+e^{63}$.

The endomorphism $B$ allows one to construct two other 3-forms (see \cite{Alb2} for the
invariant definition):
\begin{equation}
 \alpha_1=e^{156}+e^{264}+e^{345}
\end{equation}
and
\begin{equation}
 \alpha_2=e^{126}+e^{234}+e^{315}.
\end{equation}
One can prove the five 3-forms $\alpha,\ldots,\alpha_3,\theta\wedge\dx\theta$ correspond
to a
basis for the space of invariants in $\Lambda^3(\R\oplus\R^3\oplus\R^3)$ under the action
of $SO(3)$, the underlying structure group of $\calG$, ie. there are five irreducible
1-dimensional submodules\footnote{The author acknowledges I. Agricola and Th. Friedrich
for this computation.}.

The five 3-forms satisfy the `first structure equations', 
$\forall i=1,2,3$:
\begin{equation}\label{bse1} 
\begin{split}
*\alpha=\theta\wedge\alpha_3=\vol=\pi^*\vol_M,\ \ \ \ \ \ \ \ \
*\alpha_1=-\theta\wedge\alpha_2,\ \ \ \ \ \ \ \
*\alpha_2=\theta\wedge\alpha_1,\hspace{0cm}\\
*\dx\theta=\frac{1}{2}\theta\wedge(\dx\theta)^2,\ \ \ \ \
*(\dx\theta)^2=2\theta\wedge\dx\theta,\ \ \ \ \
*(\dx\theta)^3=6\theta,\hspace{0cm} \\
\alpha_1\wedge\alpha_2=3\alpha_3\wedge\alpha=3*\theta=\frac{1}{2}(\dx\theta)^3,\ \
\ \ \
\dx\theta\wedge\alpha_i=\dx\theta\wedge*\alpha_i=\alpha_3\wedge\alpha_i=0, \\
\dx\theta\wedge\alpha=\dx\theta\wedge*\alpha=\alpha\wedge\alpha_1
=\alpha\wedge\alpha_2=0.
\end{split}
\end{equation}

The natural $G_2$ structure on $\calG$ to which we have referred {\em {is}} given\footnote{Actually
the structure was given first by the opposite, $-\sigma_0$, but we take the
opportunity here to make the change.} by the 3-form
\begin{equation}
 \sigma_0=\alpha_2-\alpha+\theta\wedge\dx\theta.
\end{equation}
This form gives the \emph{canonical} representation theory without changing the canonical
orientation of $\calG$; namely it gives the usual $G_2$-modules
$\Lambda^2_7,\Lambda^2_{14}$ (which appeared from opposite highest weights in
\cite{Alb2,Alb3,AlbSal1,AlbSal2}). 

The integrability of $\sigma_0$ was studied first in the
case of the Levi-Civita connection on $M$ (\cite{AlbSal1,AlbSal2}) and
later in the case of metric connections with torsion, which clearly
allow the same construction as the unique which is torsion free (\cite{Alb2}). For
the latter we know that the $G_2$-twistor structure is cocalibrated, ie.
$\dx*\sigma_0=0$, if and only if the base $M$ is an Einstein manifold.

\subsection{Variations of $G_2$ structures}
\label{VoG2s}

Let us recall the definition of stable forms from the theory of $G_2$-manifolds,
\cite{Bryant1,Bryant2}.

Let $\sigma$ denote a linear $G_2$ structure on a 7-dimensional oriented vector space $V$. A consequence of the study
of the Lie group $G_2=\Aut{\sigma}\subset SO(7)$ is that it is connected and 14 dimensional;
henceforth, that the orbit of $\sigma$ under $GL(7,\R)$ is an open set inside the module
$\Lambda^3V^*$. This orbit is denoted $\Lambda_\pm^3$ and known as the space of stable
$G_2$-structures on $V$. We detect the boundaries of such stability
by the non-degeneracy of the induced Euclidean product. Indeed, the inner product is given
by the clearly symmetric map $(v,w)\mapsto v\lrcorner\sigma\wedge
w\lrcorner\sigma\wedge\sigma$ --- with this image 7-form required to be, on the
diagonal of $V$, a positive multiple of the chosen orientation. The given $\sigma$
satisfies this condition by assumption. Letting also $\sigma$ vary, we do
have a $GL(7,\R)$-equivariant map 
\[  V\otimes V\otimes\Lambda^3V^*\lrr\Lambda^7V^* . \]
Since $\Lambda^7V^*\backslash0$ has two connected components, we conclude
$\Lambda_\pm^3$ is the union of two open orbits under the action of
the subgroup $GL^+(7,\R)$, identified bijectively by a $-$ sign because
$(-1)^3=(-1)^7$. Moreover, the orientation
in $V$ induced by the first map itself is preserved in each of \textit{these} orbits.
Next we shall be concerned only with the positive-definite side
$\Lambda_+^3$ of $\Lambda_\pm^3$. 

We remark furthermore on the existence of split-octonionic structures, with
automorphism group the non-compact dual form of $G_2$, this time inducing metrics of
signature $\pm(3,4)$. In the following applications to gwistor space we shall not be
worried with the parallel with the split-octonionic structures, since such study may
be more easily undertaken afterwards. 

We return to the gwistor space $\calG\rr M$ and consider a variation of the
standard structure $\sigma_0$. We let $f_0,\ldots,f_4$ be scalar functions on $\calG$
and define
\begin{equation}\label{sigmageral}
 \sigma=f_0\alpha+f_1\alpha_1+f_2\alpha_2+f_3\alpha_3+f_4\theta\wedge\dx\theta.
\end{equation}
The original $G_2$ structure $\sigma_0$ is given by $-f_0=f_2=f_4=1,\
f_1=f_3=0$. At least for sufficiently close values to the standard, we do
obtain new $G_2$-struc\-tures. For the fixed orientation $\Vol_{\calG}=e^{0\cdots6}$,
induced by the Sasaki structure on $TM$ and the vector field $U$, we have that on any
two vectors $v,w$:
\begin{equation}
 v\lrcorner\sigma\wedge w\lrcorner\sigma\wedge\sigma= 
6\langle v,w\rangle_\sigma\Vol_{\sigma}= 6\langle v,w\rangle_{\sigma}m\Vol_{\calG}.
\end{equation}
The second identity defines $m>0$ as a scalar function of $\sigma$, by linearity and
because, as explained, $\sigma$ determines both the metric and the volume form, given
the orientation. $m:\calG\rr\R$ is already assumed to be positive---as we may without
loss of regularity, if the $f_i$ are smooth, or significant generality of the same
set of functions.

Lengthy but easy computations yield the result which we present next.

The metric matrix of $\langle\,\cdot\,,\,\cdot\,\rangle_\sigma$ with respect to the
adapted frame is:
\begin{equation}
 [\langle e_i,e_j\rangle_\sigma]=t\left[\begin{array}{ccccccc}
 f_4^2& & & & & & \\ &x& & & z& & \\ & &x& & & z& \\ & & &x& & & z\\
 &z& & & y& & \\ & &z& & & y& \\ & & &z& & & y \end{array}\right]
\end{equation}
where we have simplified notation by writing
\begin{equation}\label{defsdetxyz}
t=\frac{f_4}{m},\qquad x=f_2^2-f_1f_3,\qquad y=f_1^2-f_0f_2,\qquad z=f_1f_2-f_0f_3.
\end{equation}
Notice that $\sigma_0$ corresponds to the identity $1_7$. Computing determinants, the
metric is positive-definite if $f_4>0$, $x>0$ and $xy-z^2>0$. This proves the
following result.
\begin{teo}\label{condicoesdeestabilidade}
If a set of scalar functions $f_0,\ldots,f_4$ induces a $G_2$ structure on $\calG$, then it satisfies
 $f_4>0,\ \,f_2^2-f_1f_3>0$ and
\begin{equation}\label{polinomiode4ordem}
 3f_0f_1f_2f_3-f_0f_2^3-f_0^2f_3^2-f_3f_1^3>0.
\end{equation}
\end{teo}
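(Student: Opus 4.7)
The plan is to derive the three stated inequalities as a direct consequence of the positive-definiteness of the metric matrix displayed just above the theorem, using Sylvester's criterion. First I would permute the basis, grouping $e_0$ alone and pairing $(e_1,e_4),(e_2,e_5),(e_3,e_6)$, which turns the displayed $7\times 7$ matrix into a block diagonal form: a single $1\times 1$ block with entry $tf_4^2$ and three identical $2\times 2$ blocks
\[
t\begin{pmatrix} x & z \\ z & y \end{pmatrix}.
\]
Positive-definiteness of the full matrix is then equivalent to positive-definiteness of each block.

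Next I would extract the three conditions. Since $m>0$ by construction, the $1\times 1$ block gives $tf_4^2=f_4^3/m>0$, hence $f_4>0$; this automatically secures $t>0$, which we use below. From the $2\times 2$ block the condition $tx>0$ combined with $t>0$ yields $x>0$, i.e.\ $f_2^2-f_1f_3>0$, the second claim. The remaining condition is $\det(t\,\text{block})=t^2(xy-z^2)>0$, equivalent to $xy-z^2>0$.

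Finally I would verify that $xy-z^2$ coincides, as a polynomial in $f_0,f_1,f_2,f_3$, with the cubic in $f_0$ given in \eqref{polinomiode4ordem}. Substituting from \eqref{defsdetxyz},
\[
xy-z^2=(f_2^2-f_1f_3)(f_1^2-f_0f_2)-(f_1f_2-f_0f_3)^2,
\]
expanding both products and collecting terms, the $f_1^2f_2^2$ contributions cancel and the cross term $2f_0f_1f_2f_3$ combines with the $+f_0f_1f_2f_3$ already present to give $3f_0f_1f_2f_3$, yielding exactly
\[
3f_0f_1f_2f_3-f_0f_2^3-f_0^2f_3^2-f_3f_1^3,
\]
which is the polynomial of \eqref{polinomiode4ordem}. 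This closes the proof.

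I expect no real obstacle: once one accepts the displayed form of the metric matrix (which the paper states as a preliminary computation), the proof is Sylvester's criterion plus the routine algebraic identification of the $2\times 2$ discriminant with the stated quartic expression. The only place where a slip is possible is the sign bookkeeping in the $xy-z^2$ expansion, so that is the step I would write out with care.
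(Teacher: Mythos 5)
Your proof is correct and follows essentially the same route as the paper: the paper simply states ``computing determinants, the metric is positive-definite if $f_4>0$, $x>0$ and $xy-z^2>0$,'' which is exactly your Sylvester-criterion reading of the block structure, and your expansion of $xy-z^2$ into $3f_0f_1f_2f_3-f_0f_2^3-f_0^2f_3^2-f_3f_1^3$ checks out. You merely spell out the steps the paper leaves implicit.
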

\begin{trivlist}
\item {\bf Remarks}. 1. The homogeneous fourth degree polynomial is irreducible and has no critical
values in the domain. 2. The metrics obtained are all natural metrics in the sense of
\cite{AbbaCal,AbbaKowa} and other references therein.
\end{trivlist}

Using the Gram-Schmidt process on the new metric, we obtain the oriented
or\-tho\-nor\-mal
frame, for $i=1,2,3$,
\begin{equation}\label{ortframenewmetric}
 \tile_0=\frac{1}{f_4\sqrt{t}}e_0,\qquad\tile_i=\frac{1}{\sqrt{tx}}e_i,\qquad\tile_{i+3}=
\sqrt{\frac{x}{th}}(e_{i+3}-\frac{z}{x}e_i),
\end{equation}
where $h$ is the polynomial in (\ref{polinomiode4ordem}):
\begin{equation}\label{defdeh}
 h=xy-z^2.
\end{equation}

A dual co-frame is then
\begin{equation}\label{ortcoframenewmetric}
 \tile^0=f_4\sqrt{t}e^0,\qquad\tile^i=\sqrt{tx}e^i+z\sqrt{\frac{t}{x}}e^{i+3},\qquad\tile^{i+3}
=\sqrt{\frac{th}{x}}e^{i+3}.
\end{equation}
We obtain also the useful formulas
\begin{equation}\label{ortcoframenewmetric1}
 e^0=\frac{1}{f_4\sqrt{t}}\tile^0,\qquad
e^i=\frac{1}{\sqrt{txh}}(\sqrt{h}\tile^i-z\tile^{i+3}),\qquad e^{i+3}=\sqrt{\frac{x}{th}}\tile^{i+3}.
\end{equation}
Indeed the frame (\ref{ortframenewmetric}) is oriented, ie.
$\tile^{0123456}=m\,e^{0123456}$ is a positive multiple of the chosen orientation.
Immediately through \eqref{defsdetxyz} and \eqref{ortcoframenewmetric} we find that
\begin{equation}\label{efecinco}
   m=f_4h^\frac{1}{3}.  
\end{equation}

\subsection{$G_2$-structures $\sigma$ compatible with the Sasaki metric}

Let $\sigma$ be a variation of $\sigma_0$.
\begin{prop}\label{sigmarespeitandoametrica}
 The metric induced by $\sigma$ coincides with the Sasaki metric on $\calG$ if and only if 
\begin{equation}\label{sigmarespeitandoametrica1}
  f_0^2+f_1^2=1,\qquad f_2=-f_0,\qquad  f_3=-f_1,\qquad f_4=1.  
\end{equation}
Under the action of $SO(7)$ the orbit of 3-forms which can be written in the
form (\ref{sigmageral}) is a circle $S^1$.
\end{prop}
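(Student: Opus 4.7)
The plan is to exploit the explicit metric matrix stated just before the proposition, imposing that it equals the identity, and then solving the resulting algebraic system on $(f_0,f_1,f_2,f_3,f_4)$.

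First I would translate ``the induced metric is Sasaki'' into four scalar conditions on the entries of $[\langle e_i,e_j\rangle_\sigma]$: namely $tf_4^2=1$, $tx=ty=1$ and $tz=0$. Since $t>0$, the vanishing of the off-diagonal block forces $z=0$, and equality of the two diagonal blocks forces $x=y$; moreover $f_4^2=x=1/t$. At this point I would use \eqref{efecinco}: $m=f_4h^{1/3}$ together with $t=f_4/m$ yields $t=h^{-1/3}$, so $h=x^3$. On the other hand, from $z=0$ and $x=y$ the very definition $h=xy-z^2$ gives $h=x^2$. Combining $x^3=x^2$ with the stability inequality $x>0$ from Theorem \ref{condicoesdeestabilidade}, one concludes $x=1$ and hence $f_4=1$, $x=y=1$, $z=0$.

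The system then collapses to the three polynomial equations
\begin{equation*}
f_2^2-f_1f_3=1,\qquad f_1^2-f_0f_2=1,\qquad f_1f_2-f_0f_3=0,
\end{equation*}
together with $f_4=1$. I would recognise the third as the vanishing of the determinant of $\bigl(\begin{smallmatrix} f_0 & f_1 \\ f_2 & f_3 \end{smallmatrix}\bigr)$, so that $(f_0,f_1)=\lambda(f_2,f_3)$ for some $\lambda\in\R$ (the first equation rules out $(f_2,f_3)=0$). Substituting into $x=1$ and $y=1$, and then subtracting, the difference factors as $(1+\lambda)(f_2^2-\lambda f_3^2)=0$; since $f_2^2-\lambda f_3^2=x=1$, we get $\lambda=-1$, i.e. $f_2=-f_0$ and $f_3=-f_1$. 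Plugging back gives $f_0^2+f_1^2=1$, which is (\ref{sigmarespeitandoametrica1}). The converse is a one-line substitution: under these relations one checks directly $x=y=1$, $z=0$, $f_4=1$, so the metric matrix is the identity.

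For the last assertion, the set of solutions to (\ref{sigmarespeitandoametrica1}) is faithfully parametrised by $(f_0,f_1)=(\cos\phi,\sin\phi)$, $\phi\in[0,2\pi)$, with $\sigma_0$ corresponding to $\phi=\pi$; this gives a circle of $G_2$-forms all inducing the same Euclidean inner product and the same orientation, hence all conjugate by elements of $SO(7)$, and therefore contained in a single $SO(7)$-orbit. I expect the main (small) obstacle to be the step extracting $\lambda=-1$ cleanly; once the rank-$1$ reformulation of $z=0$ is in hand, everything else is essentially bookkeeping.
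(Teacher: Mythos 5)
Your proposal is correct. For the first part it follows essentially the paper's own route: the paper reads off $tf_4^2=tx=ty=1$, $z=0$ from the metric matrix and then says ``knowing $m$ must equal $1$ or equating through \eqref{efecinco} we get all these equal to $1$''; your derivation $t=h^{-1/3}$, $h=x^3$ versus $h=xy-z^2=x^2$, hence $x=1$, is exactly the second of those two options, carried out in detail, and your rank-one reformulation of $z=0$ with the factorisation $x-y=(1+\lambda)(f_2^2-\lambda f_3^2)$ is a clean way to do the step the paper dismisses as ``solving the system \eqref{defsdetxyz}''. Where you genuinely diverge is the second assertion. The paper proves it constructively: it exhibits the circle as the orbit of $\sigma_0$ under the $U(1)\subset U(3)\subset SO(7)$ acting trivially on $\theta\wedge\dx\theta$ and by $g\cdot\eta=g^3\eta$ on the complex volume form $\eta=\alpha_3-\alpha_1+\sqrt{-1}(\alpha_2-\alpha)$, which produces an explicit rotation realising each point $(f_0,f_1)\in S^1$. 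You instead invoke the general fact that the positive $3$-forms inducing a fixed metric and orientation constitute a single $SO(7)$-orbit (isomorphic to $SO(7)/G_2\cong\R\Proj^7$); that fact is standard (it is in Bryant's references cited by the paper) and makes your argument shorter but non-constructive, so you should cite it rather than treat it as self-evident. Two small points to tighten: you only argue the inclusion of the circle into one $SO(7)$-orbit, whereas the statement also needs that no other form of shape \eqref{sigmageral} lies in that orbit --- this follows immediately from your first part since $SO(7)$ preserves the Sasaki metric, but it deserves a sentence; and, like the paper's closing remark, you should note that the full orbit $SO(7)\cdot\sigma_0$ is $7$-dimensional, so the claim is only about its intersection with the five-parameter family of forms \eqref{sigmageral}.
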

\begin{proof}
By hypothesis, we have $tf_4^2=tx=ty=1$ and $z=0$. Hence $f_4^3=f_4x=f_4y=m$ and
$h=xy=f_4^4$. Knowing $m$ must equal 1 or equating through \eqref{efecinco} we get all
these equal to 1, except for $z$. Now solving the system (\ref{defsdetxyz}) we deduce the
equivalence in the first part of the result. The second follows from the first (as
the metric is preserved) and the analysis of the orbit of
$\sigma_0=\alpha_2-\alpha+\theta\wedge\dx\theta$ through known methods. So, we note
that  already $U(3)\subset SO(7)$ acts as a real group, fixing $e_0$,
on the vector space $E=H_1\oplus V_1$, which has a natural complex structure.
Moreover,
\begin{equation*}
\begin{split}
 (e^1+\sqrt{-1}e^4)\wedge(e^2+\sqrt{-1}e^5)\wedge(e^3+\sqrt{-1}e^6)
&= \\ \alpha_3-\alpha_1+\sqrt{-1}
(\alpha_2-\alpha)=:\eta \,\in\Lambda^3{E^{(1,0)}}^* & 
\end{split}
\end{equation*}
As $SU(3)\subset G_2$ we are left to consider maps $g$ such that
$g_{|E}=\expo^{is}1_E$ for some $s\in\R$. One finds easily the role of $g$ as a real
map. Immediately we deduce $g$ fixes the 3-form
$\theta\wedge\dx\theta=e^{041}+e^{052}+e^{063}$. On the other hand
$g\cdot\eta=g^3\eta$. Letting
$g$ be such that $g^3=f_0+\sqrt{-1}f_1\in S^1$ we find that this real map solves
($\Im$ denotes imaginary part)
\[ g\cdot\sigma_0=g\cdot(\Im\eta+\theta\wedge\dx\theta)
=\Im(g^3\eta)+\theta\wedge\dx\theta= 
-f_0\alpha-f_1\alpha_1+f_0\alpha_2+f_1\alpha_3+\theta\wedge\dx\theta. \]
The result follows (notice the space $SO(7)/G_2$ is 7 dimensional so 
we have to restrict our statement to the specific forms).
\end{proof}

For the following computations we apply formulas which have been deduced in \cite{Alb2,AlbSal1,AlbSal2}.
We start by the particular case found above, when the Sasaki metric is preserved.
\begin{teo}\label{caseoffixedmetric}
Suppose the Riemannian manifold $M$ is connected. Let $\sigma$ be a variation of
gwistor space satisfying the condition that the
induced metric coincides with the Sasaki metric on $\calG$, that is,
$\sigma=-f_0\alpha-f_1\alpha_1+f_0\alpha_2+f_1\alpha_3+\theta\wedge\dx\theta$ with
$(f_0,f_1):\calG\rr S^1$ a smooth function. Then we have:\\
1. Always $\dx\sigma\neq0$.\\
2. If $(f_0,f_1)\neq(\pm1,0)$, then $\dx*\sigma=0$ if and only if the functions $f_0,f_1$ are
constant and the Riemannian base $M$ has constant sectional curvature.\\
3. If $(f_0,f_1)=(\pm1,0)$, then $\dx*\sigma=0$ if and only if $M$ is Einstein.
\end{teo}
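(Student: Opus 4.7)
The strategy is to expand $\dx\sigma$ and $\dx*\sigma$ by Leibniz, substituting the formulas for $\dx\alpha$ and $\dx\alpha_i$ ($i=1,2,3$) established in \cite{Alb2,AlbSal1,AlbSal2}, where they are written in an adapted coframe as combinations of the connection 1-forms on $\calG$ and of the Riemann curvature of $M$ pulled back to $\calG$. The first structure equations \eqref{bse1} rewrite $*\sigma$ directly as a linear combination of $\theta\wedge\alpha,\theta\wedge\alpha_1,\theta\wedge\alpha_2,\theta\wedge\alpha_3$ and $\tfrac12(\dx\theta)^2$, so, using $\dx\theta\wedge\alpha_i=\dx\theta\wedge\alpha=0$ from \eqref{bse1} and $\dx(\dx\theta)^2=0$, $\dx*\sigma$ is controlled by $f_0,f_1,\dx f_0,\dx f_1$ together with the curvature 4-forms $\theta\wedge\dx\alpha_j,\theta\wedge\dx\alpha$. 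For item 1, the expansion of $\dx\sigma$ contains the summand $\dx(\theta\wedge\dx\theta)=(\dx\theta)^2=2(e^{4152}+e^{5263}+e^{4163})$, which is common to every value of $(f_0,f_1)\in S^1$; by the same reasoning applied in \cite{AlbSal1,AlbSal2} to the standard $\sigma_0$, this $SO(3)$-invariant piece of $\Lambda^4T^*\calG$ cannot be cancelled by any combination of $\dx f_i\wedge\alpha_j$ or $f_i\,\dx\alpha_j$, so $\dx\sigma\neq 0$.

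For item 3, setting $(f_0,f_1)=(\pm 1,0)$ in the parametrization $\sigma=f_0(\alpha_2-\alpha)+f_1(\alpha_3-\alpha_1)+\theta\wedge\dx\theta$ gives $*\sigma=\pm(\theta\wedge\alpha_1-\theta\wedge\alpha_3)+\tfrac12(\dx\theta)^2$, so $\dx*\sigma=0$ reduces (with a sign) to $\theta\wedge(\dx\alpha_3-\dx\alpha_1)=0$, which is precisely the cocalibration equation of the standard gwistor structure, equivalent to $M$ Einstein by the result in the introduction. For item 2, assume $(f_0,f_1)\neq(\pm 1,0)$. Explicit expansion yields
\[
\dx*\sigma = -\dx f_0\wedge\theta\wedge(\alpha_3-\alpha_1) + \dx f_1\wedge\theta\wedge(\alpha_2+\varepsilon\alpha) + \theta\wedge\bigl[f_0(\dx\alpha_3-\dx\alpha_1)-f_1(\dx\alpha_2+\varepsilon\dx\alpha)\bigr],
\]
with $\varepsilon=\pm 1$ fixed by the Hodge convention. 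By $SO(3)$-equivariance the two blocks sit in transverse isotypical components of $\Lambda^5 T^*\calG$ and vanish independently. Combined with $f_0\dx f_0+f_1\dx f_1=0$ coming from $f_0^2+f_1^2=1$ and with $(f_0,f_1)\neq(\pm 1,0)$, the first block forces $\dx f_0=\dx f_1=0$, so by connectedness of $\calG$ both $f_i$ are globally constant. The residual curvature identity is then a fixed linear combination, with coefficients not aligned with $(\pm 1,0)$, of the two independent curvature 4-forms $A=\theta\wedge(\dx\alpha_3-\dx\alpha_1)$ and $B=\theta\wedge(\dx\alpha_2+\varepsilon\dx\alpha)$; matching $SO(3)$-irreducible components of $A$ and $B$ under the (pulled-back) Riemann tensor, one sees that $A$ encodes the Einstein obstruction already appearing at $(\pm 1,0)$ while $B$ encodes the complementary trace-free (Weyl-type) obstruction, and the joint vanishing $A=B=0$ is equivalent to $M$ having constant sectional curvature.

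The main obstacle is this last identification: showing that $A$ and $B$ are algebraically independent as curvature-valued 4-forms, so that a single linear relation $f_0 A-f_1 B=0$ at a fixed $(f_0,f_1)\neq(\pm 1,0)$ forces both to vanish, and that their joint vanishing is exactly constant sectional curvature (rather than merely Einstein plus some weaker condition). This needs an explicit account in the adapted coframe of the curvature content of each $\dx\alpha_j$ and the four-dimensional decomposition of the Riemann tensor into Ricci and Weyl-type pieces.
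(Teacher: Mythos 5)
Your overall strategy coincides with the paper's: rewrite $*\sigma$ via the first structure equations \eqref{bse1} as $\theta\wedge(f_0\alpha_1+f_1\alpha_2-f_0\alpha_3-f_1\alpha)+\tfrac12(\dx\theta)^2$ and differentiate using \eqref{derivadasdosalphas}. Your item 3 is correct, and your item 1 is essentially the paper's Theorem \ref{dsigmanevervanishes}; note, though, that the assertion that the $(\dx\theta)^2$ summand ``cannot be cancelled'' is not automatic, since $\calR\alpha$ does contain terms of the type $e^{1245},e^{2356},e^{3146}$ --- it is the first Bianchi identity, $R_{2301}+R_{3102}+R_{1203}=0$, that kills that contribution (the paper makes this visible by wedging $\dx\sigma$ with $\theta\wedge\dx\theta$ to get $6f_4\Vol_\calG\neq0$). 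You should not leave that to a citation.

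The genuine gap is in item 2, at the step ``by $SO(3)$-equivariance the two blocks sit in transverse isotypical components of $\Lambda^5T^*\calG$ and vanish independently.'' This is false. The term $\dx f_0\wedge\theta\wedge\alpha_3=\dx f_0\wedge\vol$ lives in $\vol\wedge V_1^*$ (only the vertical part of $\dx f_0$ survives); but so does the curvature term $\theta\wedge\calR\alpha_1=-\rho\wedge\vol$ with $\rho=\sum_i r(e_i,e_0)e^{i+3}$, and so does the horizontal part of $\dx f_1$ wedged into $\theta\wedge\alpha_2$ (e.g.\ $e^3\wedge e^0\wedge e^{126}=\pm\,\vol\wedge e^6$). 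Hence that component of $\dx*\sigma=0$ is a single relation coupling $(\dx f_0)^{\mathrm{vert}}$, $(\dx f_1)^{\mathrm{hor}}$ and $f_0\,r(e_i,e_0)$, not three independent vanishings; $SO(3)$-equivariance cannot separate them because the coefficients $\partial_a f_i$ and the curvature components sit in the same isotypic summands. So ``$\dx f_0=\dx f_1=0$'' does not follow as you claim. The paper instead performs the componentwise analysis and extracts (i) a pure curvature equation ($R_{0123}=0$ type, coming from components not reachable by the $\dx f_i$ terms), forcing constant sectional curvature, and (ii) the condition that $f_0\dx f_0=-f_1\dx f_1$ is a multiple of $\theta$, whence constancy (e.g.\ $\dx(f_0^2)=\lambda\theta$ forces $\lambda=0$ by $\dx^2=0$ and the nondegeneracy of $\dx\theta$ on $\ker\theta$, then use connectedness). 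By contrast, what you flag as ``the main obstacle'' is the easy part: once the $f_i$ are constant, $\theta\wedge\calR\alpha_1$ (one vertical index) and $\theta\wedge\calR\alpha$ (two vertical indices) do lie in complementary subspaces, so $f_0\,\theta\wedge\calR\alpha_1$ and $f_1\,\theta\wedge\calR\alpha$ vanish separately, and $\theta\wedge\calR\alpha=0$ is exactly $R_{ij0k}=0$ for $1\le i<j\le3$, $k=1,2,3$, at every $u\in\calG$, i.e.\ constant sectional curvature (which in turn makes both terms vanish, giving the converse).
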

The proof follows by recalling the list of derivatives of the fundamental 3-forms in
(\ref{derivadasdosalphas}), which were deduced in \cite[Proposition 2.3]{Alb2}. Result (1) is the
particular case of Theorem \ref{dsigmanevervanishes} (below). For (2) we may easily
compute $\dx*\sigma$. If it is to vanish, then we deduce a curvature equation
$R_{0123}=0$, which implies constant sectional curvature on the base, and that 
$f_0\dx f_0=-f_1\dx f_1$ is a multiple of $\theta$, which implies $(f_0,f_1)$ is constant.
Finally, if the base metric has constant sectional curvature $k$, then another
curvature term appearing satisfies
$\calR\alpha=-k\theta\wedge\alpha_1$, and we find this is the solution required in
case $f_1\neq0$.

Theorem \ref{caseoffixedmetric} shows that the original gwistor space structure we
found, the standard $\sigma_0$, is indeed preferred; it has greater interest than
the others on the circle (of course, besides the antipodal of $\sigma_0$, a
duality which as explained in section \ref{VoG2s} we shall not explore here).

We shall now see a result concerning the type of $\dx\sigma$ with respect to the
$G_2$-de\-com\-position of $\Lambda^4T^*\calG$. We follow the description by
\cite{FerGray} also found in several good references such as
\cite{Agri,Bryant1,Bryant2}. A structure is said to be of pure type $W_3$ if
$\dx\sigma=*\tau_3$ with $\tau_3$ the $W_3$ part, that is satisfying
$\tau_3\wedge\sigma=\tau_3\wedge*\sigma=0$.
\begin{teo}
 The gwistor space $(\calG,\sigma)$ of a constant sectional curvature $k$ manifold 
with $\sigma$
given as before and $f_0,f_1$ constant, is of pure type $W_3$ if and only if
$k=-2$.
\end{teo}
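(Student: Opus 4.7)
The plan is to reduce the pure $W_3$ condition to a single scalar equation in $k$ and solve it. Pure type $W_3$ means $d\sigma=*\tau_3$ with $\tau_3\in\Omega^3_{27}$; in the Fern\'andez--Gray decomposition of the intrinsic torsion into components $\tau_0\in\Omega^0$, $\tau_1\in\Omega^1$, $\tau_2\in\Omega^2_{14}$, $\tau_3\in\Omega^3_{27}$, this is equivalent to $\tau_0=\tau_1=\tau_2=0$. The standard expansion of $d*\sigma$ has two summands lying in the distinct $G_2$-irreducible summands $\Omega^5_7\oplus\Omega^5_{14}$, so $d*\sigma=0$ alone forces $\tau_1=\tau_2=0$. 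Under our hypotheses the base is Einstein and $f_0,f_1$ are constant, so Theorem \ref{caseoffixedmetric} already gives $d*\sigma=0$, and only $\tau_0=0$ remains. Wedging the reduced equation $d\sigma=\tau_0*\sigma+*\tau_3$ with $\sigma$, the second term drops out since $\sigma\wedge*\tau_3=\langle\sigma,\tau_3\rangle\Vol_{\calG}=0$ by $G_2$-orthogonality of $\Omega^3_1$ and $\Omega^3_{27}$, while $\sigma\wedge*\sigma$ is a positive multiple of the volume. Hence pure $W_3$ is equivalent to the single scalar equation $d\sigma\wedge\sigma=0$.

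Next, since $f_0,f_1$ are constant,
\[
 d\sigma=-f_0\,d\alpha-f_1\,d\alpha_1+f_0\,d\alpha_2+f_1\,d\alpha_3+(d\theta)^2,
\]
and each derivative is provided by the list \eqref{derivadasdosalphas}, i.e.\ \cite[Proposition 2.3]{Alb2}, via the curvature operator $\calR^U$ acting on the basic 3-forms. For constant sectional curvature $k$ these collapse: the excerpt already records $\calR\alpha=-k\theta\wedge\alpha_1$, and analogous identities for $\calR\alpha_i$ reduce every $d\alpha_i$ to an explicit 4-form with coefficients linear in $k$.

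Finally I would wedge with $\sigma$ and contract using the first structure equations \eqref{bse1}. Many cross-wedges vanish outright ($d\theta\wedge\alpha_i=d\theta\wedge\alpha=\alpha_3\wedge\alpha_i=\alpha\wedge\alpha_1=\alpha\wedge\alpha_2=0$); the surviving top-forms are all multiples of the three identities $\theta\wedge\alpha_3\wedge\alpha=\Vol_{\calG}$, $\theta\wedge\alpha_1\wedge\alpha_2=3\Vol_{\calG}$, and $\theta\wedge(d\theta)^3=6\Vol_{\calG}$. Collecting coefficients, using $f_0^2+f_1^2=1$ to absorb the circle dependence, one obtains a polynomial that is linear in $k$; setting it to zero gives the claimed value $k=-2$.

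The principal difficulty is the second step: faithfully porting the curvature-dependent expressions for $d\alpha_i$ from \cite{Alb2} to the constant-sectional-curvature setting without sign or normalization slips, and confirming that the $(f_0,f_1)$-dependence genuinely cancels along the whole circle rather than at isolated points. Once $d\sigma$ is in hand the final wedge is essentially mechanical, courtesy of the numerous orthogonality relations in \eqref{bse1}.
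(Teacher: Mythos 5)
Your reduction of pure type $W_3$ to the single scalar equation $\dx\sigma\wedge\sigma=0$ (cocalibration from the Einstein condition killing $\tau_1,\tau_2$, then $G_2$-orthogonality killing the $*\tau_3$ term) followed by an explicit wedge computation is exactly the paper's argument; the paper carries out the step you defer, using $\calR\alpha=-k\,\theta\wedge\alpha_1$ and $\calR\alpha_1=-2k\,\theta\wedge\alpha_2$ to get $\dx\sigma=\theta\wedge\bigl(-3f_1\alpha+f_0(k+2)\alpha_1+f_1(2k+1)\alpha_2-3f_0k\alpha_3\bigr)+(\dx\theta)^2$ and hence $\dx\sigma\wedge\sigma=6(2+k)\Vol_{\calG}$. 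The $(f_0,f_1)$-dependence does cancel along the whole circle via $f_0^2+f_1^2=1$, exactly as you anticipated.
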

\begin{proof}
Our invoked Riemann tensor satisfies
$R_{ijpq}=k(\delta_i^q\delta_j^p-\delta_i^p\delta_j^q)$ for a constant sectional
curvature metric (this is not a sign convention; it is a compatibility condition
between required tensors on $\calG$ and tensors on the base manifold). By definitions
in (\ref{calralpha},\ref{calralpha1}), seen below but known from \cite{Alb2},
we have $\calR\alpha=-k\theta\wedge\alpha_1,\,\
\calR\alpha_1=-2k\theta\wedge\alpha_2$. 

Now, since the metric is Einstein we have $\dx*\sigma=0$ by Theorem
\ref{caseoffixedmetric} and thence
$\dx\sigma=\lambda*\sigma+*\tau_3$ (in other words, cf. \cite{Bryant2}, we have
$\tau_1=\tau_2=0$). The condition of pure type $W_3$, equivalently $\lambda=0\in\R$,
corresponds by a simple argument to $(\dx\sigma)\wedge\sigma=0$. 

With $\sigma=-f_0\alpha-f_1\alpha_1+f_0\alpha_2+f_1\alpha_3+\theta\wedge\dx\theta$,
we get the following formula:
\begin{equation}\label{dsigmaCSCcomSasaki}
 \dx\sigma=\theta\wedge\bigl(-3f_1\alpha+f_0(k+2)\alpha_1+f_1(2k+1)\alpha_2-
 3f_0k\alpha_3\bigr) +(\dx\theta)^2.
\end{equation}
Using the `first structure equations' from \eqref{bse1} or \cite[Proposition
2.1]{Alb2} and
$f_0^2+f_1^2=1$, we have
\begin{eqnarray*}
 \dx\sigma\wedge\sigma &=& (3f_1^2+3f_0^2(k+2)+3f_1^2(2k+1)+3f_0^2k+6)\Vol_\calG\\
  &=& (6f_1^2+6f_0^2+6(f_1^2+f_0^2)k+6)\Vol_\calG\\
  &=& 6(2+k)\Vol_\calG .
\end{eqnarray*}
Hence the result.
\end{proof}
We recover, in particular, the result in \cite[Corollary 3.1]{Alb2} for the
preferred $\sigma_0=\alpha_2-\alpha+\theta\wedge\dx\theta$ on hyperbolic space of
sectional curvature $-2$. Notice however the independency from the pair $(f_0,f_1)\in
S^1$. The same is true with the following quite noticeable formula.
\begin{prop}
Assuming the same conditions as above,  $\|\dx\sigma\|^2=48$ if and only if $k=-2$ or
$k=1$.
\end{prop}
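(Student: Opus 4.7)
The plan is a direct norm computation starting from the explicit expression \eqref{dsigmaCSCcomSasaki} for $d\sigma$. Since under the assumptions of the previous theorem the induced metric is the Sasaki metric, the adapted frame $e_0,\ldots,e_6$ is orthonormal, and the norm of $d\sigma$ can be read off by expanding the basic 3-forms and $(d\theta)^2$ in that frame.

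First I would verify that the five 4-forms $\theta\wedge\alpha,\ \theta\wedge\alpha_1,\ \theta\wedge\alpha_2,\ \theta\wedge\alpha_3,\ (d\theta)^2$ are pairwise orthogonal. The square $(d\theta)^2 = 2(e^{4152}+e^{4163}+e^{5263})$ contains no $e^0$ while all four $\theta\wedge\alpha_i$ do, so $(d\theta)^2$ is orthogonal to each of them. Among the remaining four, the index triples appearing in $\alpha,\alpha_1,\alpha_2,\alpha_3$ ($\{4,5,6\}$; $\{1,5,6\},\{2,4,6\},\{3,4,5\}$; $\{1,2,6\},\{2,3,4\},\{1,3,5\}$; $\{1,2,3\}$) are mutually disjoint as 3-element subsets of $\{1,\ldots,6\}$, so the wedges with $\theta$ land in disjoint basis 4-forms. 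This gives immediately $\|\theta\wedge\alpha\|^2 = \|\theta\wedge\alpha_3\|^2 = 1$, $\|\theta\wedge\alpha_1\|^2 = \|\theta\wedge\alpha_2\|^2 = 3$, and $\|(d\theta)^2\|^2 = 12$.

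Applying these norms to \eqref{dsigmaCSCcomSasaki} yields
\[
 \|d\sigma\|^2 = 9f_1^2 + 3f_0^2(k+2)^2 + 3f_1^2(2k+1)^2 + 9f_0^2k^2 + 12 .
\]
The pleasant algebraic identity $(k+2)^2 + 3k^2 = 3 + (2k+1)^2 = 4(k^2+k+1)$ collapses the coefficients of $f_0^2$ and $f_1^2$ to a common value; then using $f_0^2+f_1^2=1$ reduces the expression to $12(k^2+k+1)+12 = 12(k^2+k+2)$. Setting this equal to $48$ gives $k^2+k-2=0$, i.e.\ $(k-1)(k+2)=0$, whence $k=1$ or $k=-2$. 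The only non-routine point is the cancellation that makes $\|d\sigma\|^2$ independent of $(f_0,f_1)\in S^1$; once that identity is observed, the rest is bookkeeping, and this same independence accounts for the noted independence of the conclusion from the choice of point on the circle.
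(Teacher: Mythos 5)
Your computation is correct and follows exactly the route the paper takes: it simply fills in the details behind the paper's one-line claim that \eqref{dsigmaCSCcomSasaki} yields $\|\dx\sigma\|^2=12(k^2+k+2)$, using orthonormality of the adapted frame and the identity $f_0^2+f_1^2=1$. (One wording quibble: the index triples such as $\{4,5,6\}$ and $\{1,5,6\}$ are \emph{distinct}, not disjoint, but distinctness is all that is needed for the basis $4$-forms to be orthogonal, so the argument stands.)
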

\begin{proof}
 We easily deduce from \eqref{dsigmaCSCcomSasaki} that $\|\dx\sigma\|^2=12(k^2+k+2)$.
\end{proof}


\subsection{Properties of the general case}

Let us consider some metric problems related with the variations of gwistor space.

Suppose $(f_0,\ldots,f_4):\calG\rr\R^5$ is a function satisfying the conditions in
Theorem \ref{condicoesdeestabilidade}. We study those 3-forms
\begin{equation}\label{sigmageral2}
 \sigma=f_0\alpha+f_1\alpha_1+f_2\alpha_2+f_3\alpha_3+f_4\theta\wedge\dx\theta
\end{equation}
which define $G_2$-structures on $\calG\rr M$.

\begin{trivlist}
\item {\bf Remarks}. 1. Recall a metric almost contact structure is said to be
K-contact if the characteristic
vector field is Killing. In the case of the Sasaki metric, $(\calG,\theta,B^tU)$ is
K-contact if and only if $M$ is locally isometric to $S^4$ of radius 1, a result due
to
Y.~Tashiro. In general, our metrics $\langle\,\cdot\,,\,\cdot\,\rangle_\sigma$
induced from  $\sigma$ turned out to be `$g$-natural' contact metrics in
the sense of e.g. \cite{AbbaCal} (in particular the immediate question of
$\langle\,\cdot\,,\,\cdot\,\rangle_\sigma$ being K-contact is solved in the same
reference). 2.
Another feature of gwistor theory is that $\sigma$ seems to be never preserved by the
vector field $B^tU$. This is known both as the geodesic spray or the geodesic flow
vector field, cf. \cite{Geig,Sakai}. Indeed, computations for constant $f_i$ have
shown that the equation ${\cal L}_{B^tU}\sigma=0$ has no solution
$\sigma\in\Lambda_+^3$. For any $f_i$ defined on $\calG$, or even just the pull-back
of functions on $M$, one may write interesting differential equations.
\end{trivlist}

Now we shall compute the exterior derivatives of the $G_2$-structures. From the
formulas
in (\ref{ortcoframenewmetric1}) we deduce
\begin{equation}
 \theta=\frac{1}{f_4t^\frac{1}{2}}\tilde{\theta},\qquad\qquad 
 \dx\theta=\frac{1}{th^\frac{1}{2}}\widetilde{\dx\theta},\qquad\qquad 
\alpha=\frac{x^\frac{3}{2}}{(th)^\frac{3}{2}}\tilde{\alpha},
\end{equation}
\begin{equation}
\alpha_1=\frac{x^\frac{1}{2}}{t^\frac{3}{2}h}\bigl({\tilde{\alpha}}_1-
\frac{z}{h^\frac{1}{2}}\tilde{\alpha}\bigr),\qquad\qquad
\alpha_2=\frac{1}{x^\frac{1}{2}(th)^\frac{3}{2}}(h{\tilde{\alpha}}_2
-2h^\frac{1}{2}z\tilde{\alpha}_1+3z^2\tilde{\alpha}),
\end{equation}
\begin{equation}
 \alpha_3=
 \frac{1}{(txh)^\frac{3}{2}}\bigl(h^\frac{3}{2}\tilde\alpha_3-hz{\tilde{\alpha}}_2 
+h^\frac{1}{2}z^2\tilde{\alpha}_1-z^3\tilde{\alpha}\bigr).
\end{equation}
The forms with a tilde are defined algebraically using the orthonormal basis for
$\sigma$,
formally introduced as the respective $\theta,\dx\theta,\alpha,\ldots,\alpha_3$. For
instance
$\tilde{\theta}=\tile^0,\ \widetilde{\dx\theta}=\tile^{41}+\tile^{52}+\tile^{63}$,
cf. \eqref{algumasformas}. In
particular, we note that we may use the already mentioned `first
structure equations' from \eqref{bse1} but with a tilde!

We also need the inverse formulas of the above:
\begin{equation}
\widetilde{\theta\wedge\dx\theta}=f_4t^\frac{3}{2}h^\frac{1}{2}\theta\wedge\dx\theta,
\qquad\qquad\quad
\tilde\alpha=\frac{(th)^\frac{3}{2}}{x^\frac{3}{2}}\alpha,
\end{equation}
\begin{equation}
\tilde{\alpha}_1=\frac{ht^\frac{3}{2}}{x^\frac{3}{2}}\bigl(x\alpha_1+3z\alpha\bigr),
\qquad\qquad
\tilde{\alpha}_2=\frac{h^\frac{1}{2}t^\frac{3}{2}}{x^\frac{3}{2}}\bigl(x^2\alpha_2+
2xz\alpha_1+3z^2\alpha\bigr),
\end{equation}
\begin{equation}
 \tilde\alpha_3=\frac{t^\frac{3}{2}}{x^\frac{3}{2}}\bigl(x^3\alpha_3+x^2z\alpha_2 
+xz^2\alpha_1+z^3\alpha\bigr).
\end{equation}
Using the `first structure equations' for the Hodge
operator of the metric and orientation induced by $\sigma$, and writing back in terms
of
the usual frame, we obtain the following result.
\begin{teo}\label{asteriscosigmabasics}
\begin{equation}\label{asteriscosigmabasicas1}
 *_\sigma(\theta\wedge\dx\theta)=\frac{t^\frac{1}{2}h^\frac{1}{2}}{2f_4}(\dx\theta)^2
,
\end{equation}
\begin{equation}\label{asteriscosigmabasicas2}
*_\sigma\alpha=\frac{f_4t^\frac{1}{2}}{h^\frac{3}{2}}
\theta\wedge\bigl(x^3\alpha_3+x^2z\alpha_2+xz^2\alpha_1+z^3\alpha\bigr),
\end{equation}
\begin{equation}\label{asteriscosigmabasicas3}
*_\sigma\alpha_1=-\frac{f_4t^\frac{1}{2}}{xh^\frac{3}{2}}
\theta\wedge\bigl(3x^3z\alpha_3+x^2(h+3z^2)\alpha_2+x(2hz+3z^3)\alpha_1
+(3hz^2+3z^4)\alpha\bigr),
\end{equation}
\begin{equation}\label{asteriscosigmabasicas4}
*_\sigma\alpha_2=\frac{f_4t^\frac{1}{2}}{x^2h^\frac{3}{2}}
\theta\wedge\bigl(3x^3z^2\alpha_3+x^2(2hz+3z^3)\alpha_2+x(h^2+4hz^2+3z^4)\alpha_1
+(3h^2z+6hz^3+3z^5)\alpha\bigr),
\end{equation}
\begin{equation}\label{asteriscosigmabasicas5}
\begin{split}
*_\sigma\alpha_3=-\frac{f_4t^\frac{1}{2}}{x^3h^\frac{3}{2}}
\theta\wedge\bigl(x^3z^3\alpha_3+x^2(hz^2+z^4)\alpha_2+ \bigr.  \hspace{5cm}\\
\bigl. +x(h^2z+2hz^3+z^5)\alpha_1+(h^3+3h^2z^2+3hz^4+z^6)\alpha\bigr).
\end{split}
\end{equation}
\end{teo}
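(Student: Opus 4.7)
The strategy is to transport the computation to the orthonormal coframe $\tile^0,\ldots,\tile^6$ of the metric induced by $\sigma$, where the first structure equations \eqref{bse1} apply formally with tildes. Indeed, the tilded 3-forms $\tilde\theta,\widetilde{\dx\theta},\tilde\alpha,\tilde\alpha_1,\tilde\alpha_2,\tilde\alpha_3$ were defined algebraically from $\tile^0,\ldots,\tile^6$ by direct analogy with \eqref{algumasformas}, so the same computation that yields \eqref{bse1} gives verbatim
\[ *_\sigma\tilde\alpha=\tilde\theta\wedge\tilde\alpha_3,\quad *_\sigma\tilde\alpha_1=-\tilde\theta\wedge\tilde\alpha_2,\quad *_\sigma\tilde\alpha_2=\tilde\theta\wedge\tilde\alpha_1,\quad *_\sigma\tilde\alpha_3=-\tilde\theta\wedge\tilde\alpha, \]
together with $*_\sigma(\tilde\theta\wedge\widetilde{\dx\theta})=\frac{1}{2}(\widetilde{\dx\theta})^2$, which follows from the tilded identity $*_\sigma(\widetilde{\dx\theta})^2=2\tilde\theta\wedge\widetilde{\dx\theta}$ by applying $*_\sigma$ once more and using $*_\sigma^2=\Id$ on 4-forms in dimension $7$.

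The proof then follows a three-step pattern for each of the basic forms $\omega\in\{\theta\wedge\dx\theta,\alpha,\alpha_1,\alpha_2,\alpha_3\}$: first, use the displayed conversion formulas to rewrite $\omega$ as a scalar combination of tilded forms; second, apply $*_\sigma$ termwise using the tilded structure equations just recalled; third, substitute the inverse formulas for the $\tilde\alpha_j$ and $\tilde\theta=f_4t^{1/2}\theta$ back in and simplify. The simplest case is $\omega=\alpha$, where only $*_\sigma\tilde\alpha=\tilde\theta\wedge\tilde\alpha_3$ is required and the prefactors combine after one expansion to yield \eqref{asteriscosigmabasicas2}. For $\omega=\alpha_i$ with $i\geq 1$, each of the several tilde-components produces its own Hodge dual, and the polynomial entries in $h$ and $z$ appearing in \eqref{asteriscosigmabasicas3}--\eqref{asteriscosigmabasicas5} emerge from accumulating the cross terms over all contributing $\tilde\alpha_j$.

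The main obstacle is purely the book-keeping: the expression for $*_\sigma\alpha_3$ involves terms up to $h^3$ and $z^6$, and a misplaced sign or miscounted power of $t$ in any intermediate line propagates through the whole chain. I would therefore organise the calculation as the product of the row of coefficients of $\alpha_3$ in its tilde-expansion, namely $(h^{3/2},-hz,h^{1/2}z^2,-z^3)$ against $(\tilde\alpha_3,\tilde\alpha_2,\tilde\alpha_1,\tilde\alpha)$, followed by the Hodge duals $(-\tilde\theta\wedge\tilde\alpha,\tilde\theta\wedge\tilde\alpha_1,-\tilde\theta\wedge\tilde\alpha_2,\tilde\theta\wedge\tilde\alpha_3)$, and finally by the $4\times 4$ substitution matrix giving each $\tilde\alpha_j$ in terms of $\alpha,\alpha_1,\alpha_2,\alpha_3$. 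As an end-of-calculation sanity check, specialising to the values for $\sigma_0$ (so $x=1$, $z=0$, $h=1$, $t=1$, $f_4=1$) must collapse each formula to its counterpart in \eqref{bse1}, confirming all signs and powers at once.
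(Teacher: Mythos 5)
Your proposal is correct and follows essentially the same route as the paper: the paper likewise expresses each basic form in the tilded (i.e.\ $\sigma$-orthonormal, oriented) coframe, applies the first structure equations \eqref{bse1} formally with tildes, and substitutes back via the inverse formulas. The sanity check at $\sigma_0$ (where $x=h=t=f_4=1$, $z=0$) and the careful sign bookkeeping for $*_\sigma\tilde\alpha_3=-\tilde\theta\wedge\tilde\alpha$ are exactly the points one needs to get right, and you have them.
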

\begin{coro}
The Hodge $*$ operator is homogeneous of degree $\frac{1}{3}$ on 3-forms
viewed as a map $\sigma\rightsquigarrow*_\sigma$.
\end{coro}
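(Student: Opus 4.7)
The plan is to bypass the explicit formulas \eqref{asteriscosigmabasicas1}--\eqref{asteriscosigmabasicas5} (which only cover the $SO(3)$-invariant 3-forms) and argue abstractly: since $\sigma$ determines both the metric $\langle\,\cdot\,,\,\cdot\,\rangle_\sigma$ and the volume form $\Vol_\sigma=m\Vol_\calG$, the Hodge $*_\sigma$ is completely characterized, and its scaling is forced by those of the metric and the volume.

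First I would tabulate how every piece of data transforms under $\sigma\mapsto\lambda\sigma$. Clearly $f_i\mapsto\lambda f_i$, so from \eqref{defsdetxyz} the quadratic quantities satisfy $x,y,z\mapsto\lambda^2 x,\lambda^2 y,\lambda^2 z$, whence $h=xy-z^2\mapsto\lambda^4 h$ by \eqref{defdeh} and $m=f_4h^{1/3}\mapsto\lambda^{7/3}m$ by \eqref{efecinco}; since $t=f_4/m$, this gives $t\mapsto\lambda^{-4/3}t$. As a consistency check, each entry of the metric matrix is $t$ times a quantity built from $x,y,z$ or $f_4^2$, hence scales by $\lambda^{-4/3}\cdot\lambda^2=\lambda^{2/3}$, so $\langle\,\cdot\,,\,\cdot\,\rangle_\sigma\mapsto\lambda^{2/3}\langle\,\cdot\,,\,\cdot\,\rangle_\sigma$ on tangent vectors. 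The same conclusion follows at once from the defining identity $v\lrcorner\sigma\wedge w\lrcorner\sigma\wedge\sigma=6\langle v,w\rangle_\sigma\,m\,\Vol_\calG$: the left-hand side is manifestly cubic in $\sigma$, so combined with $m\mapsto\lambda^{7/3}m$ it yields the factor $\lambda^{3-7/3}=\lambda^{2/3}$ on the inner product.

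Next I would pass to the dual metric on $k$-forms, which scales as $\lambda^{-2k/3}$; in particular the induced inner product on 3-forms picks up a factor of $\lambda^{-2}$. Then I invoke the characterization of the Hodge star for an arbitrary (unscaled) 3-form $\omega$ and any 3-form $\eta$:
\begin{equation*}
\omega\wedge*_\sigma\eta=\langle\omega,\eta\rangle_\sigma\,\Vol_\sigma.
\end{equation*}
Replacing $\sigma$ by $\lambda\sigma$ leaves the wedge product on the left unaffected in $\omega,\eta$, while the right-hand side acquires the factor $\lambda^{-2}\cdot\lambda^{7/3}=\lambda^{1/3}$. Since $\omega\in\Lambda^3T^*\calG$ is arbitrary, one concludes $*_{\lambda\sigma}\eta=\lambda^{1/3}*_\sigma\eta$ for every 3-form $\eta$, which is the claimed homogeneity.

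There is no real obstacle here; the argument is just bookkeeping of exponents, and the only delicate point is the passage from vector metric to $k$-form metric, which is the usual $\mu\mapsto\mu^{-k}$ rule for the dual of a rescaled symmetric form. As a sanity check one may revisit the explicit coefficients of Theorem \ref{asteriscosigmabasics}, e.g. $t^{1/2}h^{1/2}/(2f_4)$ in \eqref{asteriscosigmabasicas1} or $f_4t^{1/2}x^3/h^{3/2}$ in \eqref{asteriscosigmabasicas2}, and verify directly that each transforms by exactly $\lambda^{1/3}$ under the tabulated scalings.
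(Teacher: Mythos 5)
Your argument is correct, and it takes the route the paper only gestures at in its closing parenthesis (``from the definition will also do''), whereas the paper's actual proof is the other one: after establishing the degrees of $x,y,z,h,m,t$ exactly as you do, it simply reads off that the coefficient $t^{\frac{1}{2}}h^{\frac{1}{2}}/2f_4$ in \eqref{asteriscosigmabasicas1} has degree $\frac{4}{3}-1=\frac{1}{3}$ and stops there. Your version is arguably the more complete one: checking the coefficient of a single formula (or even of all five in Theorem \ref{asteriscosigmabasics}) only verifies homogeneity on the $5$-dimensional span of the $SO(3)$-invariant $3$-forms, while the statement is about $*_\sigma$ on all of $\Lambda^3T^*\calG$; your use of the defining identity $\omega\wedge*_\sigma\eta=\langle\omega,\eta\rangle_\sigma\Vol_\sigma$, together with the scalings $\langle\,\cdot\,,\,\cdot\,\rangle_\sigma\mapsto\lambda^{2/3}\langle\,\cdot\,,\,\cdot\,\rangle_\sigma$ (hence $\lambda^{-2}$ on $3$-forms) and $\Vol_\sigma\mapsto\lambda^{7/3}\Vol_\sigma$, and the nondegeneracy of the wedge pairing, settles it for an arbitrary $\eta$. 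The exponent bookkeeping is identical to the paper's ($x,y,z$ of degree $2$, $h$ of degree $4$, $m$ of degree $\frac{7}{3}$, $t$ of degree $-\frac{4}{3}$), and your cross-checks (the cubic dependence of $v\lrcorner\sigma\wedge w\lrcorner\sigma\wedge\sigma$ on $\sigma$, and the explicit coefficients of Theorem \ref{asteriscosigmabasics}) are consistent. The only cosmetic caveat is that the scaling should be taken with $\lambda>0$, since $m\mapsto\lambda^{7/3}m$ would otherwise violate $m>0$ and leave $\Lambda^3_+$; this is how the paper itself uses the corollary later, with $s\in\R^+$.
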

\begin{proof}
From definitions, we see $x,y,z$ have degree 2 and thence $h$ has degree 4;
then $m=f_4h^\frac{1}{3}$ and $\Vol_\sigma$ have degree $\frac{7}{3}$
and finally
$t=f_4/m$ has degree $-\frac{4}{3}$. Finally, observing
(\ref{asteriscosigmabasicas1}) the result follows (from the definition will also do).
\end{proof}

Now we recall the formulas from \cite[Proposition 2.3]{Alb2}:
\begin{equation}\label{derivadasdosalphas}
\begin{split}
&\dx\alpha=\calR\alpha, \qquad\dx\alpha_1=3\theta\wedge\alpha+\calR\alpha_1,\\
&\quad\dx\alpha_2=2\theta\wedge\alpha_1-\underline{r}\,\vol,\qquad
\dx\alpha_3=\theta\wedge\alpha_2 .
\end{split}
\end{equation}
$\calR\alpha,\calR\alpha_1$ are linearly independent forms depending on the
curvature $R$ of $M$, and $\underline{r}$ is a scalar function on $\calG$ defined by
$\underline{r}(u)=r(u,u)$, with $R$ and $r$ the usual Riemann and Ricci
curvature tensors. Concretely, cf. \cite[formulas 25 and 26]{Alb2},
\begin{equation}\label{calralpha}
 \calR\alpha=\sum_{0\leq i<j\leq3}R_{ij01}e^{ij56}+R_{ij02}e^{ij64}+R_{ij03}e^{ij45},
 \end{equation}
\begin{equation}\label{calralpha1}
 \calR\alpha_1=\sum_{0\leq i<j\leq3}
R_{ij01}(e^{ij26}+e^{ij53})+R_{ij02}(e^{ij61}+e^{ij34})
+R_{ij03}(e^{ij15}+e^{ij42}).
\end{equation}
In particular $\theta\wedge\calR\alpha_1=-\rho\wedge\vol$ where
$\rho=\sum_{i=1}^3r(e_i,e_0)e^{i+3}$.
\begin{teo}\label{dsigmanevervanishes}
 For any functions $f_0,\ldots,f_4$, we have $\dx\sigma\neq0$. 
\end{teo}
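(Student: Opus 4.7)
The plan is to expand $\dx\sigma$ by \eqref{derivadasdosalphas} together with $\dx(\theta\wedge\dx\theta)=(\dx\theta)^2$, and then extract the coefficients of the three 4-forms $e^{1245},\,e^{1346},\,e^{2356}$. These are precisely the monomials that appear (with the common factor $-2$) in $(\dx\theta)^2=-2(e^{1245}+e^{1346}+e^{2356})$; they are characterised by avoiding the contact direction $e^0=\theta$ and by being supported on a `diagonal' pair $(e^i,e^{i+3})$.

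The first main step is a bookkeeping argument: among the many summands of $\dx\sigma$, only $f_4(\dx\theta)^2$ and $f_0\calR\alpha$ can contribute to these three coefficients. Each summand of the form $\theta\wedge(\cdots)$---namely $3f_1\theta\wedge\alpha$, $2f_2\theta\wedge\alpha_1$, $-f_2\underline{r}\,\vol$, $f_3\theta\wedge\alpha_2$ and $\dx f_4\wedge\theta\wedge\dx\theta$---dies because $0$ is absent from each target index set. A direct inspection of the eight monomials $e^{456},e^{156},e^{264},e^{345},e^{126},e^{234},e^{315},e^{123}$ forming $\alpha,\alpha_1,\alpha_2,\alpha_3$ shows that none has its three indices contained in any of $\{1,2,4,5\},\{1,3,4,6\},\{2,3,5,6\}$; hence no summand $\dx f_i\wedge\alpha_j$ ($j\neq4$) contributes. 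The same case analysis applied to the monomials of $\calR\alpha_1$ in \eqref{calralpha1} rules out $f_1\calR\alpha_1$. Reading \eqref{calralpha} then gives the three coefficients
\[
 -2f_4+f_0R_{1203},\qquad -2f_4-f_0R_{1302},\qquad -2f_4+f_0R_{2301}.
\]

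Suppose now that $\dx\sigma$ vanishes at a point. The three expressions above are all zero. The first Bianchi identity $R_{0123}+R_{0231}+R_{0312}=0$, rewritten via pair symmetry and antisymmetry in each pair as $R_{1203}+R_{2301}=R_{1302}$, combined with the outer two equations gives $f_0R_{1302}=f_0(R_{1203}+R_{2301})=4f_4$; the middle equation forces $f_0R_{1302}=-2f_4$. Hence $6f_4=0$, contradicting the stability condition $f_4>0$ of Theorem \ref{condicoesdeestabilidade}.

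The main obstacle is not the Bianchi-identity cancellation, which is one line, but the case analysis ruling out every other possible contribution to the three chosen coefficients; this reduction is what makes the argument work pointwise and completely independently of the functions $f_0,\ldots,f_4$ and of the curvature of the base.
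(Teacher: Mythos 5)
Your proof is correct and is essentially the paper's argument: both isolate the $(\dx\theta)^2$-component of $\dx\sigma$, observe via the structure equations that only $f_4(\dx\theta)^2$ and $f_0\calR\alpha$ can contribute there, and then use the first Bianchi identity together with $f_4>0$ to conclude. The only cosmetic difference is that the paper packages your three coefficient equations into the single identity $\theta\wedge\dx\theta\wedge\dx\sigma=6f_4\Vol_\calG\neq0$, whereas you extract the coefficients of $e^{1245},e^{1346},e^{2356}$ individually and derive the contradiction $6f_4=0$.
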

\begin{proof}
Indeed, since $\dx\theta\wedge\alpha_i=0,\forall i=0,1,2,3,\ \alpha_0=\alpha$, we
have by the Bianchi identity
\begin{eqnarray*}
 \theta\wedge\dx\theta\wedge\dx\sigma &=&
\theta\wedge\dx\theta\wedge\bigl(f_4(\dx\theta)^2+\sum\dx
f_i\wedge\alpha_i+f_i\dx\alpha_i\bigr)\\
&=& (6f_4+f_0(R_{2301}+R_{3102}+R_{1203}))\Vol_\calG=6f_4\Vol_\calG .
\end{eqnarray*} 
However, we saw $f_4$ must be positive.
\end{proof}

From now on we assume the functions $f_0,\ldots,f_4$ are constant.

Returning to the Hodge duals of Theorem \ref{asteriscosigmabasics}, then we have by
simple reasons
\begin{equation}\label{ddasasteriscosigmastructureforms0}
 \dx(*_\sigma(\theta\wedge\dx\theta)) = 0,
\end{equation}
\begin{equation}
\dx(*_\sigma\alpha)= -\frac{f_4t^\frac{1}{2}}{h^\frac{3}{2}}
\theta\wedge\bigl(xz^2\calR\alpha_1+z^3\calR\alpha\bigr),
\end{equation}
\begin{equation}
\dx(*_\sigma\alpha_1) = \frac{f_4t^\frac{1}{2}}{xh^\frac{3}{2}}
\theta\wedge\bigl(x(2hz+3z^3)\calR\alpha_1+(3hz^2+3z^4)\calR\alpha\bigr),
\end{equation}
\begin{equation}
 \dx(*_\sigma\alpha_2) = -\frac{f_4t^\frac{1}{2}}{x^2h^\frac{3}{2}}
\theta\wedge\bigl(x(h^2+4hz^2+3z^4)\calR\alpha_1+(3h^2z+6hz^3+3z^5)\calR\alpha\bigr),
\end{equation}
\begin{equation}\label{ddasasteriscosigmastructureforms1}
\dx(*_\sigma\alpha_3) = \frac{f_4t^\frac{1}{2}}{x^3h^\frac{3}{2}}
\theta\wedge\bigl(x(h^2z+2hz^3+z^5)\calR\alpha_1+
(h^3+3h^2z^2+3hz^4+z^6)\calR\alpha\bigr) .
\end{equation}

Adding up the above with the respective coefficients from \eqref{sigmageral}, we find
the vanishing of the two polynomials
\begin{equation}\label{polinome1}
{\mathfrak{p}}_1=
-f_0x^3z^2+f_1x^2(2hz+3z^3)-f_2x(h^2+4hz^2+3z^4)+f_3(h^2z+2hz^3+z^5),
\end{equation}
\begin{equation}\label{polinome2}
{\mathfrak{p}}_2= 
f_0x^3z^3-f_1x^2(3hz^2+3z^4)+f_2x(3h^2z+6hz^3+3z^5)-f_3(h^3+3h^2z^2+3hz^4+z^6)
\end{equation}
is a sufficient condition for the vanishing of $\dx(*_\sigma\sigma)$:
\begin{equation}
 \dx(*_\sigma\sigma)=\frac{f_4t^\frac{1}{2}}{x^3h^\frac{3}{2}}
\theta\wedge\bigl(x{\mathfrak{p}}_1\,\calR\alpha_1-{\mathfrak{p}}_2\,
\calR\alpha\bigr).
\end{equation}
Also the reader understands now why we chose constant coefficients. If
$z\neq0$, we may multiply the first polynomial by $z$, add to the second and
factor out a $h(>0)$ from the result, to obtain:
\begin{equation}\label{polinome21}
  -f_1 x^2 z^2 + 2 f_2 x h z + 2 f_2 z^3 x - f_3 h^2 - 2 f_3 h z^2 - f_3 z^4.
\end{equation}
Finally, introducing equations (\ref{defsdetxyz},\ref{defdeh}) and resorting to
some computer algebra software, we are able to find two independent
expressions in the original parameters $f_0,\ldots,f_3$:
\begin{equation}\label{polinome1T}
{\mathfrak{p}}_1= -f_0 \left(f_1^2-f_0 f_2\right) \left(-f_2^2+f_1 f_3\right)^2
\end{equation}
\begin{equation}\label{polinome2T}
\begin{split}
&{\mathfrak{p}}_2= (f_2^2 - f_1 f_3)^3 \bigl(-2 f_0 f_1^3 f_2^3 + 3 f_0^2 f_1 f_2^4
-f_1^6 f_3 + 6 f_0 f_1^4 f_2 f_3 -  6 f_0^2 f_1^2 f_2^2 f_3 \bigr.\\
& \hspace{4.6cm} \bigl. - 2 f_0^3 f_2^3 f_3 - 
   3 f_0^2 f_1^3 f_3^2 + 6 f_0^3 f_1 f_2 f_3^2 - f_0^4 f_3^3\bigr)
\end{split}
 \end{equation}
Notice they are homogeneous, as expected, and notice the factor $y=f_1^2-f_0f_2$ in
the
second polynomial and the common factor $x=f_2^2-f_1f_3$, which must both be positive
by
hypothesis. From equivalence we get the simple expression 
\begin{equation}\label{polinome21T}
(f_1^3-2 f_0 f_1 f_2+f_0^2 f_3)(f_2^2 - f_1 f_3)^3\ \ (=(\ref{polinome21})).
\end{equation}
\begin{teo}\label{cocalibrationingeneral}
A 3-form $\sigma$ as above defining a $G_2$-structure, with $f_0,\ldots,f_4$
constant, satisfies
$\dx*_\sigma\sigma=0$ if and only if any one of the following occurs:\\
(i) the polynomial $\mathfrak{p}_2$ from (\ref{polinome2T}) vanishes and  $M$ is
Einstein.\\
(ii) $M$ has constant sectional curvature.
\end{teo}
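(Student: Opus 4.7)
The plan is to start from the closed-form expression
\[
\dx(*_\sigma\sigma)=\frac{f_4t^{1/2}}{x^3h^{3/2}}\,\theta\wedge\bigl(x\,\mathfrak{p}_1\,\calR\alpha_1-\mathfrak{p}_2\,\calR\alpha\bigr)
\]
already assembled above. Since the scalar prefactor is strictly positive, cocalibration will be equivalent to the identity $x\mathfrak{p}_1\,\theta\wedge\calR\alpha_1=\mathfrak{p}_2\,\theta\wedge\calR\alpha$ holding everywhere on $\calG$.

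The first structural step is to observe that the two curvature 5-forms $\theta\wedge\calR\alpha$ and $\theta\wedge\calR\alpha_1$ lie in linearly independent subspaces of $\Lambda^5T^*\calG$: inspection of \eqref{calralpha} and \eqref{calralpha1} after wedging by $\theta=e^0$ shows that each summand of $\theta\wedge\calR\alpha$ carries exactly two factors from $V_1^*=\mathrm{span}(e^4,e^5,e^6)$, while each summand of $\theta\wedge\calR\alpha_1$ carries exactly one. Consequently the above identity decouples into the two independent conditions
\[
\mathfrak{p}_1\,\theta\wedge\calR\alpha_1=0\qquad\text{and}\qquad\mathfrak{p}_2\,\theta\wedge\calR\alpha=0,
\]
and since $\mathfrak{p}_1,\mathfrak{p}_2$ are constants, each reads: either the polynomial vanishes, or the corresponding curvature 5-form vanishes pointwise on all of $\calG$.

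I next translate these two pointwise conditions into intrinsic data on $M$. The identity $\theta\wedge\calR\alpha_1=-\rho\wedge\vol$ with $\rho=\sum_{i=1}^3r(e_i,e_0)e^{i+3}$ recorded earlier makes $\theta\wedge\calR\alpha_1\equiv0$ equivalent to $r(e_i,e_0)=0$ in every orthonormal frame at every point, hence to $M$ being Einstein. Expanding $\theta\wedge\calR\alpha$ in the adapted frame exhibits in particular the independent coefficient $R_{2301}=R_{0123}$; its pointwise vanishing over every point of $\calG$ is the assertion $R_{0123}=0$ in every orthonormal basis, which in dimension four already forces $M$ to have constant sectional curvature --- the same principle invoked in the proof of Theorem \ref{caseoffixedmetric}. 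With these dictionaries in place, sufficiency of (i) and (ii) is immediate: under (ii) both curvature 5-forms vanish identically, and under (i) the Einstein hypothesis kills $\theta\wedge\calR\alpha_1$ while $\mathfrak{p}_2=0$ kills the other contribution.

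For necessity, I would distribute the alternatives. If $M$ has constant sectional curvature we are in (ii); otherwise $\theta\wedge\calR\alpha\not\equiv0$, forcing $\mathfrak{p}_2=0$, and if in addition $M$ is Einstein we are in (i). The only residual scenario is that $M$ is neither Einstein nor of constant sectional curvature, in which case the first split equation forces $\mathfrak{p}_1=0$. Invoking the factorisation (\ref{polinome1T}), $\mathfrak{p}_1=-f_0\,y\,x^2$, together with the stability inequalities $x>0$ and $y>0$ (the latter from $xy>z^2\geq0$), this reduces to $f_0=0$. Substituting $f_0=0$ into the one-line polynomial (\ref{polinome21T}) and into $h=xy-z^2$, a direct check shows that $\mathfrak{p}_2=0$ is incompatible with $h>0$ for any admissible $(f_1,f_2,f_3)$, ruling out the residual case. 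The main obstacle in this plan is the curvature-theoretic step identifying $\theta\wedge\calR\alpha\equiv0$ with constant sectional curvature; the rest is controlled bookkeeping with the polynomial formulas already in hand.
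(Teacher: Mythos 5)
Your proposal is correct and follows essentially the same route as the paper: the same splitting of $\dx(*_\sigma\sigma)$ into the two independent curvature terms, the same dictionaries ($\theta\wedge\calR\alpha_1\equiv0\Leftrightarrow$ Einstein, $\theta\wedge\calR\alpha\equiv0\Leftrightarrow$ constant sectional curvature), and the same key fact that $\mathfrak{p}_1$ and $\mathfrak{p}_2$ cannot vanish simultaneously, which the paper derives up front from $\mathfrak{p}_1=-f_0yx^2$ and the positivity of $x,y,h$, while you recover it as a contradiction in the residual case. Your explicit justification of the decoupling by counting vertical co-frame factors is a welcome detail the paper leaves implicit.
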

\begin{proof}
Notice first that $\dx*_\sigma\sigma=0$ if and only if both $\theta\wedge
{\mathfrak{p}}_1\,\calR\alpha_1$ and $\theta\wedge{\mathfrak{p}}_2\,
\calR\alpha$ vanish.
Also we note that, if $f_0=0$, then neither $f_1$ or $f_3$ can vanish (otherwise we
would get $y=0$ or $h=0$ from definition). 
So the two main polynomials cannot vanish simultaneously, as we see directly, or from
the
implied equation (\ref{polinome21T}).

Now, if ${\mathfrak{p}}_2$ vanishes, then we may conclude that $f_0\neq0$,
ie. the first polynomial ${\mathfrak{p}}_1$ does not vanish. So the cocalibration
equation becomes equivalent to the vanishing of
$\theta\wedge\calR\alpha_1=-\rho\wedge\vol$, which happens if and only if $M$ is
Einstein. Conversely, if the polynomial ${\mathfrak{p}}_2$ does not
vanish, then the equation relies on a metric such that $\theta\wedge\calR\alpha=0$;
equivalently, $R_{1201}=R_{2301}=0$, etc. This is the same as $M$ having constant
sectional curvature. In particular, $M$ being Einstein.
\end{proof}
For example, if $f_0=0$, then we are certainly bound to the second case. 

Noteworthy is the case when $f_1f_2=f_0f_3$ (or $z=0$), which generalizes Proposition
\ref{caseoffixedmetric}. By formulas
(\ref{ddasasteriscosigmastructureforms0}...\ref{ddasasteriscosigmastructureforms1})
we see
\begin{equation}
 \dx*_\sigma\sigma=f_3\frac{f_4t^\frac{1}{2}}{x^3h^\frac{3}{2}}
h^3\theta\wedge\calR\alpha=
\frac{f_3f_4t^\frac{1}{2}h^\frac{3}{2}}{ x^3}\theta\wedge\calR\alpha.
\end{equation}

A question put to the author by colleagues was: if we could always find,
invariant of the metric on $M$, a natural $G_2$ structure which would be co-closed.
The answer is no, because the two polynomials do not vanish simultaneously. By the
contrary we stress the relevance of $G_2$ cocalibration goes much beyond the known
cases and examples.

\subsection{Nearly-parallel $G_2$-structures}

Nearly-parallel $G_2$-structures on 7-dimensional manifolds are defined by
$\dx*_\sigma\sigma=0$ and
$\dx\sigma=c*_\sigma\sigma$ for some constant $c$. Clearly, if $c\neq0$, the
condition is simply the latter equation.

We consider a variation of the $G_2$ structure on $\calG$, as in (\ref{sigmageral2}). In order to
find a nearly-parallel structure $\sigma$, we may assume already that it is cocalibrated
($c\neq0$). Recall the Hodge $*$ operator is homogeneous of degree $1/3$ on 3-forms
viewed as a map $\sigma\rightsquigarrow*_\sigma$. Hence if we find a solution
to the above in our subspace of $\sigma\in\Lambda^3_+$, we find a line of solutions:
\begin{equation}
 \dx(s\sigma)=cs*_\sigma\sigma=cs^{-\frac{1}{3}}*_{s\sigma}s\sigma,\quad
s\in\R^+.
\end{equation}

We restrict here to the case $z=f_1f_2-f_0f_3=0$, the less `prohibitive' condition. And
continue to assume the coefficients are constants.
\begin{teo}
Under the previous condition, the only metric on an oriented Riemannian 4-manifold $M$ for
which a $(\calG,\sigma)$ is nearly-parallel is the constant sectional curvature 1 metric. Then
there are two classes of solutions, represented by the following two $G_2$-structures:
\begin{equation}
 \sigma_\pm=\pm\frac{\sqrt{2}}{2}(\alpha_2-\alpha+\alpha_3-\alpha_1)+\sqrt{\frac{3}{2}}
\theta\wedge\dx\theta,
\end{equation}
both satisfying $\dx\sigma=\sqrt{6}*_\sigma\sigma$.
\end{teo}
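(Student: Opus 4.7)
The starting point is that a nearly parallel structure is automatically cocalibrated: differentiating $\dx\sigma=c*_\sigma\sigma$ with $c\neq 0$ gives $\dx*_\sigma\sigma = 0$. Under the running hypothesis $z=0$, the polynomial $\mathfrak{p}_2$ from \eqref{polinome2T} reduces to $-f_3h^3$, so by Theorem \ref{cocalibrationingeneral} either $f_3 = 0$ (a degenerate subcase which, combined with $z=0$, forces $f_1f_2 = 0$ and can be handled directly) or $M$ has constant sectional curvature $k$. I would focus on the latter.

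The plan is then to expand both sides of $\dx\sigma = c*_\sigma\sigma$ in the basis $\{(\dx\theta)^2, \theta\wedge\alpha, \theta\wedge\alpha_1, \theta\wedge\alpha_2, \theta\wedge\alpha_3\}$ of $\Lambda^4\calG$ and match coefficients. For the left side one uses \eqref{derivadasdosalphas} with the constant-curvature substitutions $\calR\alpha = -k\theta\wedge\alpha_1$, $\calR\alpha_1 = -2k\theta\wedge\alpha_2$ and $\underline{r} = 3k$ (the last because $r(u,u)=3k$ on unit vectors, combined with $\vol = \theta\wedge\alpha_3$); for the right side one specialises Theorem \ref{asteriscosigmabasics} to $z=0$, together with $t = h^{-1/3}$ from \eqref{efecinco}. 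The match on $(\dx\theta)^2$ immediately yields $c = 2f_4/h^{1/3}$, and four scalar equations relating the $f_i$ and $k$ remain.

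The main work is algebraic manipulation of those four equations, and this is where the main obstacle lies. The natural move is to form products and exploit $f_1f_2 = f_0f_3$: multiplying the $\theta\wedge\alpha$ and $\theta\wedge\alpha_3$ equations cancels a factor of $f_0f_3$ and yields the clean scaling identity $f_4^4 = 9kh/4$; multiplying the $\theta\wedge\alpha_1$ and $\theta\wedge\alpha_2$ equations, and using $z = 0$ once more, factors as $(f_0k+f_2)(f_1k+f_3) = 0$. One then argues that both factors must vanish (a second use of $z=0$, together with the positivity from Theorem \ref{condicoesdeestabilidade}), giving $f_2 = -kf_0$ and $f_3 = -kf_1$. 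Plugging these back and eliminating, one finds $x = ky$ and $x^2 = kh$; fixing the scaling freedom $\sigma\mapsto s\sigma$ by a canonical normalisation of $c$ then pins down $k = 1$, $x = y = h = 1$ and $f_4 = \sqrt{3/2}$.

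The final step is to identify the solutions. With these values the remaining constraint is $f_1^2 + f_2^2 = 1$ together with $f_0 = -f_2$ and $f_3 = -f_1$, the analogue of the $S^1$-orbit of Proposition \ref{sigmarespeitandoametrica}; the two $\sigma_\pm$ are distinguished representatives with the additional symmetry $f_0 = f_1$, $f_2 = f_3$ (accounting for the two classes up to orientation), and one checks $c = 2f_4/h^{1/3} = 2\sqrt{3/2} = \sqrt{6}$.
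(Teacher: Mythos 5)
Your opening moves are sound and broadly track the paper's strategy. Nearly-parallel implies cocalibrated, and under $z=0$ the specialisation $\mathfrak{p}_2=-f_3h^3$ together with Theorem \ref{cocalibrationingeneral} forces constant sectional curvature; the paper gets this instead from the $SO(3)$-invariance of $c*_\sigma\sigma$, but your route is legitimate, apart from the $f_3=0$ subcase which you defer without actually closing. The componentwise matching of $\dx\sigma=c*_\sigma\sigma$, the value $c=2f_4h^{-1/3}$, and the identities $f_4^4=9kh/4$ and $(kf_0+f_2)(kf_1+f_3)=0$ all check out (modulo the edge cases $f_0f_3=0$, which need separate treatment since the product trick divides by $f_0f_3$).

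The genuine gap is in the endgame. Once you have $f_2=-kf_0$, $f_3=-kf_1$ and $2f_4^2=3x$ (equivalently $x=ky$, $x^2=kh$), substituting back shows that \emph{all five} component equations are satisfied identically --- for every $k>0$ and every admissible $(f_0,f_1)$. There is no residual equation left to exploit, so the two conclusions carrying the content of the theorem, namely $k=1$ and the reduction to exactly two 3-forms, are not derived. In particular the proposed fix --- normalising $c$ to pin down $k=1$ --- cannot work: rescaling $\sigma\mapsto s\sigma$ rescales all the $f_i$ equally, leaves the base metric (hence $k$) untouched, and leaves the scale-invariant ratio $x/y=k$ unchanged, so no normalisation of $\sigma$ converts a $k\neq1$ configuration into a $k=1$ one. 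The paper obtains $k=1$ from its normalisation $x=y=1$ together with its explicit system (which yields $2f_4^2=k+2=3k$), and obtains the two representatives from the equation forcing $f_0=f_1$ there; your derivation reproduces neither step, and the final identification of $\sigma_\pm$ as ``distinguished representatives'' of the circle is an assertion, not an argument. Worse, your (correct) relations exhibit apparent solutions outside the claimed set: for instance $\sigma=\alpha_2-\alpha+\sqrt{3/2}\,\theta\wedge\dx\theta$ over $S^4$ (i.e.\ $f_1=f_3=0$, $f_2=-f_0=1$) already satisfies $\dx\sigma=\sqrt6\,*_\sigma\sigma$ when one applies \eqref{derivadasdosalphas} and \eqref{asteriscosigmabasicas1}--\eqref{asteriscosigmabasicas5} directly. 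Before the argument can be completed you must either locate the constraint that eliminates such points, or reconcile your component equations with the system displayed in the paper's proof, whose quadratic terms in $(f_0,f_1)$ do not arise from a direct coefficient comparison.
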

\begin{proof}
Since we assume $z=0$ and this is maintained on the line $\R^+\sigma$, there exists a positive
multiple of $\sigma$ such that $(f_0,f_1)$ is in the unit circle. Then we easily deduce $x=y=1$ and
$f_2=-f_0,\
f_3=-f_1$. Hence $h=1=t$ and $m=f_4$, cf. (\ref{efecinco}).

From formulas (\ref{asteriscosigmabasicas1}...\ref{asteriscosigmabasicas5}) and the hypothesis of
$\sigma$ being nearly-parallel, we see the 4-form $\dx\sigma$ is again $SO(3)$-invariant. Then
we easily deduce the curvature restriction: it must be of the constant kind. The equation
$\dx\sigma=c*_\sigma\sigma$ is solved using those same formulas. Looking at
components, we find a system ($k$ is the sectional curvature)
\[ \left\{ \begin{array}{l}
  c=2f_4\\ f_0f_1-kf_0^2=0 \\ 2f_0f_1k+f_0f_1-3f_1^2=0 \\
  3f_1-2f_0f_4^2=0 \\ 2f_0+kf_0-2f_0f_4^2=0
 \end{array}\right.  . \]
This yields $f_0=f_1$, which occurs twice in the circle; and $k=1,\ f_4=\sqrt{3/2},\ c=\sqrt{6}$.
The given 3-forms satisfy the equation and are genuine $G_2$-structures.
\end{proof}
Notice the metric on $\calG$ is the same on both solutions. Now we recall the classification of
nearly-parallel $G_2$ structures in \cite{FriKaMoSe}. The ones we got correspond to
the Stiefel manifold $V_{5,2}=SO(5)/SO(3)$ in their Table 2, which is of course the unit tangent
sphere bundle of $S^4$. The $G_2$ structure is constructed as a $U(1)$-bundle over
the complex quadric
$G_{5,2}$, the Grassmannian of 2-planes, with a K\"ahler-Einstein metric. The resulting
nearly-parallel $G_2$ structure is said to be Einstein-Sasakian for {\em some}
homogeneous
$SO(5)$-invariant metric. We have thus found more detail of this case. It is also most
interesting to see that our result gives a metric coinciding precisely with the Einstein
metric on $V_{5,2}$ deduced in \cite[Theorem 4]{AbbaKowa}. It has Riemannian scalar
curvature $\frac{63}{4}$, by a formula there.

\medskip



\begin{thebibliography}{10}


\bibitem{AbbaCal}
M. T. K. Abbassi and G. Calvaruso,
{\em $g$-Natural Contact Metrics on Unit Tangent Sphere Bundles},
Monatsh. Math. 151 (2006), 89--109.



\bibitem{AbbaKowa}
M.T.K. Abbassi and O.Kowalski,
{\em On Einstein Riemannian $g$-natural metrics on unit tangent sphere bundles},
Ann. Global Anal. Geom. 38(1), (2010), 11--20.





\bibitem{Agri}
I. Agricola,
{\em The Srn\'\i\ lectures on non-integrable geometries with torsion},
Archi. Mathematicum (Brno), Tomus 42 (2006), Suppl., 5--84.


\bibitem{Alb2}
R. Albuquerque,
{\em On the $G_2$ bundle of a Riemannian 4-manifold},
J. Geom. Phys. 60 (2010), 924--939.



\bibitem{Alb3}
R. Albuquerque,
{\em On the characteristic connection of gwistor space},
to appear in Central European Journal of Mathematics.


\bibitem{AlbSal1}
R. Albuquerque and I. Salavessa,
{\em The $G_2$ sphere of a 4-manifold},
Monatsh. Math. 158, Issue 4 (2009), 335--348. 


\bibitem{AlbSal2}
R. Albuquerque and I. Salavessa,
{\em Erratum to: The $G_2$ sphere of a 4-manifold},
Monatsh. Math. 160, Issue 1 (2010), 109--110.





\bibitem{Blair}
D. Blair,
\newblock {\em Riemannian geometry of contact and symplectic manifolds},
\newblock Progress in Mathematics, vol. 203, Birkha\"user Boston Inc., Boston, MA, 2002.





\bibitem{Bryant1}
R. L. Bryant,
\newblock {\em Metrics with exceptional holonomy}, 
\newblock Annals of Mathematics (2), vol. 126 no. 3 (1987), 525--576.

\bibitem{Bryant2}
R. L. Bryant,
\newblock {\em Some remarks on $G_2$ structures},
\newblock Proceedings of the 2004 Gokova Conference on Geometry and Topology (May, 2003).







\bibitem{FerGray}
M. Fern\'andez and A. Gray, 
\newblock {\em Riemannian manifolds with structure group $G_2$}, 
\newblock Ann. Mat. Pura Appl. 4 132 (1982), 19--45. 


\bibitem{Obri}
N. O'Brian and J. Rawnsley,
\newblock {\em Twistor spaces},
\newblock Ann. Global Anal. Geom. 3(1) (1985), 29--58.

\bibitem{FriKaMoSe}
Th. Friedrich, I. Kath, A. Moroianu and U. Semmelmann,
\newblock {\em On nearly parallel $G_2$-structures},
\newblock J. Geom. Phys. 23 (1997), 259--286.

\bibitem{FriIva1}
Th. Friedrich and S. Ivanov,
\newblock {\em Killing spinor equations in dimension 7 and geometry of integrable $G_2$-manifolds},
\newblock J. Geom. Phys. 48 (2003), 1--11.




\bibitem{Geig}
H. Geiges,
{\em An Introduction to Contact Topology},
Cambridge Studies in Advanced Mathematics 109, Cambridge University Press, 2008.








\bibitem{Sakai}
T. Sakai,
{\em Riemannian Geometry},
Transl. Math. Mono. Vol 149 AMS, 1996.







\end{thebibliography}
\end{document}